\newtheorem{Le}{Lemma}[section]
\newtheorem{Th}[Le]{Theorem}
\theoremstyle{definition}
\newtheorem{Def}[Le]{Definition}
\theoremstyle{remark}
\newtheorem{Rem}[Le]{Remark}
\begin{document}

\title{$G_2$-orbifolds from K3 surfaces with ADE-singularities} 
\author{Frank Reidegeld}

\maketitle

\begin{abstract}
We construct compact $G_2$-orbifolds with ADE-singularities that carry 
exactly one parallel spinor. Our examples are related to certain quotients 
of $\mathbb{C}^2\times T^3$ that have been investigated in \cite{Ach}. We shortly 
discuss the physical applications of our examples.   
\end{abstract}

\tableofcontents

\section{Introduction}

In recent years, Riemannian manifolds with holonomy $G_2$ have attracted 
considerable attention. They are studied for purely mathematical reasons and
as compactifications of M-theory. Usually, a $G_2$-manifold is defined 
as a smooth manifold. Nevertheless, it turns out that $G_2$-orbifolds with
ADE-singularities along associative submanifolds are interesting objects, too. 
It is assumed that they arise as boundary components of the moduli space
of smooth $G_2$-structures on a fixed manifold \cite{Morrison1,Morrison2}. 
Moreover, M-theory compactified on a $G_2$-orbifold with ADE-singularities
yields a super Yang-Mills theory with non-abelian gauge group in the low-energy
limit \cite{Ach,Ach2}. 

Explicit examples of such orbifolds with holonomy $G_2$ are hard to 
construct and one-parameter families of smooth $G_2$-manifolds that
converge to a $G_2$-orbifold with ADE-singularities even more. A more
approachable problem is to search for orbifolds with holonomy $\text{Hol}_0\rtimes
\Delta$ where $\text{Hol}_0\in\{1,Sp(1),SU(3)\}$ is the identity component 
of the holonomy and $\Delta$ is a discrete group such that $\text{Hol}_0\rtimes
\Delta$ acts irreducibly on the tangent space. For example, the case 
$\text{Hol}_0=SU(3)$, $\Delta=\mathbb{Z}_2$ yields so called barely 
$G_2$-manifolds. If we search for orbifolds with a holonomy group of
this kind, our problem becomes much simpler since we can use arguments 
from complex geometry. Moreover, such orbifolds share many features
with orbifolds with holonomy $G_2$, for example they admit exactly one 
parallel spinor. This makes them suitable candidates for compactifications of 
M-theory that provide four-dimensional field theories with $\mathcal{N}=1$. 

In the literature \cite{Ach}, there are examples of flat $G_2$-orbifolds with 
ADE-singularities which can serve as local models for more generic 
$G_2$-orbifolds. More explicitly, let $\Gamma$ be a discrete subgroup of 
$SU(2)$. The quotient $\mathbb{C}^2/\Gamma$ carries a hyper-K\"ahler 
structure that is defined by the flat Hermititan metric and the three linearly 
independent self-dual 2-forms on $\mathbb{C}^2$. The product 
$\mathbb{C}^2/\Gamma\times T^3$, where $T^3$ is the three-dimensional 
flat torus, is divided by a discrete group $H$. $H$ acts on both factors separately 
and preserves the hyper-K\"ahler structure on $\mathbb{C}^2/\Gamma$. 
Therefore, the quotient carries a $G_2$-structure. 

The aim of this article is to use the examples from \cite{Ach} to construct 
compact $G_2$-orbifolds with holonomy of type $Sp(1)\rtimes \Delta$. 
In order to do this, we replace $\mathbb{C}^2/\Gamma$ by a K3 surface 
$S$ with singularities that carries a hyper-K\"ahler metric. $H$ shall act 
on $S\times T^3$ such that 

\begin{enumerate}
    \item $H$ acts on $S$ by isometries,
    
    \item the three-dimensional representation of $H$ that is induced by
    the action of $H$ by pull-backs on the space of all K\"ahler
    forms is the same as in \cite{Ach} and
    
    \item the action of $H$ on $T^3$ is the same as in \cite{Ach}.
\end{enumerate}

We prove that there exists a class of examples with the desired properties.
In the most singular case, $S$ has two singularities of type $E_8$, but 
there are also many examples with milder singularities that will be described
in the course of the article. We shortly discuss the low-energy limit of M-theory
compactified on our $G_2$-orbifolds. The bosonic part of the four-dimensional
field theory contains a Yang-Mills theory with gauge group 
$U(1)^{16-\text{rank}(G)}\times G$ where the group $G$ is determined by the 
singularities. 

Although our model is to simplistic to describe the standard model 
of particle physics - it does not contain chiral fermions -, it has several interesting 
features. Since K3 surfaces are extremely well studied, it should be possible to 
determine further physical quantities, e.g. coupling constants, the superpotential etc., 
more or less explicitly. Moreover, our $G_2$-orbifolds are naturally fibered by 
coassociative K3 surfaces. This fact could make them interesting for studying
the duality between M-theory and heterotic string theory.

\section{$G_2$-orbifolds and their applications in physics}

In this section, we introduce some facts about $G_2$-manifolds and 
-orbifolds that we need to describe the examples of \cite{Ach} in detail. 

\begin{Def}
A 3-form $\phi$ on a 7-dimensional manifold $M$ is called a 
\emph{$G_2$-structure} if for all $p\in M$ there exists a local coframe $e^1,\ldots,
e^7$ on a neighborhood $U$ of $p$ such that

\begin{equation}
\label{G2Form}
\phi|_U =  e^{123} + e^{145} + e^{167} + e^{246} - e^{257} - e^{347} 
- e^{356}\:,  
\end{equation}

where $e^{ijk}$ is defined as $e^i\wedge e^j \wedge e^k$.  
\end{Def} 

\begin{Rem}
The differential form (\ref{G2Form}) has $G_2$ as stabilizer group. 
$GL(7,\mathbb{R})$ acts transitively on the set of all forms on a 7-dimensional
vector space with stabilizer $G_2$. Therefore, we could have defined a 
$G_2$-structure as a 3-form that is stabilized by $G_2$ at each point.   
\end{Rem}

On any manifold with a $G_2$-structure $\phi$ there exists a canonical 
metric $g$ which is determined by the following formula:

\begin{equation}
\label{Metric}
g(X,Y) e^{1234567} = \frac{1}{6} (X\lrcorner \phi) \wedge (Y\lrcorner \phi)
\wedge \phi 
\end{equation}

\begin{Def} 
A $G_2$-structure $\phi$ is called \emph{parallel} if $d\phi = d\ast\phi = 0$,
where the Hodge-star $\ast$ is with respect to the canonical metric and the
volume form that is locally given by $e^{1234567}$. A 7-dimensional manifold 
together with a parallel $G_2$-structure is called a \emph{$G_2$-manifold}. 
\end{Def}

\begin{Le}
Let $(M,\phi)$ be a $G_2$-manifold and let $g$ be the metric on $M$ that is defined
by Equation (\ref{Metric}). Then 

\begin{itemize}
    \item $\phi$ is parallel with respect to the Levi-Civita connection. 
    
    \item The holonomy of $g$ is a subgroup of $G_2$. 
    
    \item $g$ is Ricci-flat.  
\end{itemize}
\end{Le}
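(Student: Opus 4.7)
The plan is to establish the three bullets in order, since each implies the next with little extra work.

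First I would show $\nabla\phi=0$, which is the Fern\'andez--Gray theorem. The strategy is to analyze the intrinsic torsion of the $G_2$-structure. Because the inclusion $G_2\subset SO(7)$ determines a reductive complement, $\nabla\phi$ lies in a bundle isomorphic to $T^*M\otimes \mathfrak{g}_2^\perp\subset T^*M\otimes\Lambda^3 T^*M$, and this bundle decomposes under $G_2$ into four irreducible components, usually denoted $\tau_0,\tau_1,\tau_2,\tau_3$ (of dimensions $1,7,14,27$). The key algebraic fact I would establish, by decomposing $\Lambda^4 T^*M$ and $\Lambda^5 T^*M$ into $G_2$-irreducibles and matching components, is that $d\phi$ and $d\!\ast\!\phi$ together detect all four torsion classes: $d\phi$ carries $\tau_0,\tau_1,\tau_3$ while $d\!\ast\!\phi$ carries $\tau_1,\tau_2$. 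Hence vanishing of both differentials forces the full intrinsic torsion to vanish, and therefore $\nabla\phi=0$.

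Once this is in hand, the second bullet is immediate. Parallel transport preserves $\phi$ pointwise, so every holonomy transformation fixes the value of $\phi$ at the base point. By the remark just above, the pointwise stabilizer of $\phi$ in $GL(7,\mathbb{R})$ is $G_2$, so $\mathrm{Hol}(g)\subset G_2$. Because $G_2\subset SO(7)$, parallel transport also preserves the canonical metric~$g$, which is consistent with $\nabla$ being the Levi-Civita connection.

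For Ricci-flatness I would use the standard representation-theoretic argument. The curvature tensor $R$, viewed as a section of $S^2(\Lambda^2 T^*M)$, takes values in $S^2\mathfrak{hol}\subset S^2\mathfrak{g}_2$ by the Ambrose--Singer theorem, and the first Bianchi identity further cuts this down to a subspace $\mathcal{R}(\mathfrak{g}_2)$. A direct decomposition of $\mathfrak{g}_2\otimes\mathfrak{g}_2$ under $G_2$ shows that $\mathcal{R}(\mathfrak{g}_2)$ is irreducible (of dimension $77$) and that the Ricci contraction $\mathcal{R}(\mathfrak{g}_2)\to S^2 T^*M$ is a $G_2$-equivariant map into a sum of irreducibles none of which appears in $\mathcal{R}(\mathfrak{g}_2)$; by Schur's lemma this map is zero, so $\mathrm{Ric}(g)=0$. (Equivalently, one can invoke the existence of a parallel spinor coming from $G_2\subset\mathrm{Spin}(7)$ and apply the Lichnerowicz-type identity $\mathrm{Ric}(X)\cdot\psi=0$.)

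The main obstacle is the first step: setting up the torsion decomposition and verifying which components appear in $d\phi$ and $d\!\ast\!\phi$ is the technical core and requires a careful $G_2$-representation calculation on $\Lambda^\bullet T^*M$. The remaining two bullets are then essentially formal consequences.
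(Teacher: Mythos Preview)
Your sketch is correct and follows the standard route (Fern\'andez--Gray for $\nabla\phi=0$, then the stabilizer argument for holonomy, then Bonan's representation-theoretic argument or the parallel-spinor identity for Ricci-flatness). There is nothing to compare against, however: the paper states this lemma as a well-known background fact and gives no proof at all. So your proposal is not an alternative to the paper's argument but rather a supply of the omitted proof, and the approach you chose is exactly the classical one that the result is usually attributed to.
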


A $G_2$-manifold $(M,\phi)$ may carry $1$, $2$, $4$ or $8$ parallel spinors.
In the last three cases, $M$ is the product of a circle with a Calabi-Yau threefold, 
the product of a torus with a K3 surface or a 7-dimensional torus. M-theory on 
$\mathbb{R}^{3,1} \times M$ thus yields as its low-energy limit a four-dimensional 
field theory with $\mathcal{N}=2$, $4$, or $8$. If $(M,\phi)$ carries only one parallel 
spinor, the holonomy is either $G_2$, $SU(3)\rtimes \Delta$ or $Sp(1)\rtimes \Delta$, 
where $\Delta$ is a discrete group such that the holonomy group acts irreducibly
on the tangent space. In the last two of these cases, $M$ is a suitable quotient 
of a $G_2$-manifold with $2$ or $4$ parallel spinors. 

In the rest of this section, let $(M,\phi)$ be a $G_2$-manifold with exactly one
parallel spinor. We consider M-theory on $\mathbb{R}^{3,1}\times M$. In the
low-energy limit we obtain a four-dimensional theory with $\mathcal{N}=1$. Its
field content consists of $b^3(M)$ chiral multiplets, $b^2(M)$ abelian vector 
multiplets and the graviton multiplet. In particular, the behaviour of the particles 
with spin $1$ is described by a Yang-Mills theory with gauge group $U(1)^{b^2(M)}$.
Therefore, we cannot obtain a supersymmetric extension of the standard model of 
particle physics by this ansatz. Nevertheless, it is possible to obtain field theories 
with non-abelian gauge groups if we allow $M$ to be a $G_2$-orbifold. We need 
a particular kind of singularities that we describe below.

\begin{Th}
\label{ADE-Thm} (Felix Klein \cite{Klein})
Let $\Gamma$ be a finite subgroup of $SU(2)$ and let $\tau: SU(2) \rightarrow
SO(3)$ be the usual double cover. Then $\Gamma$ is conjugate either to a
cyclic group that is generated by

\begin{equation}
\label{An-1}
\left(\,
\begin{array}{cc}
\exp{\left(\frac{2\pi i}{n}\right)} & 0 \\
0 & \exp{\left(-\frac{2\pi i}{n}\right)} \\ 
\end{array}
\,\right)
\end{equation}

or it is up to conjugation the preimage of the dihedral, tetrahedal, octahedral or 
icosahedral subgroup of $SO(3)$ with respect to $\tau$.  
\end{Th}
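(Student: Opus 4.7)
The plan is to first classify finite subgroups of $SO(3)$ and then lift them through the double cover $\tau$. For the lifting step, the key observation is that $\ker\tau=\{\pm I\}$ and $-I$ is the unique element of order $2$ in $SU(2)$ (an order-$2$ element is diagonalisable with eigenvalues $\pm 1$ and determinant $+1$, forcing both eigenvalues to be $-1$). Cauchy's theorem then forces $-I\in\Gamma$ whenever $|\Gamma|$ is even, in which case $\Gamma=\tau^{-1}(\tau(\Gamma))$. If $|\Gamma|$ is odd, $\tau|_\Gamma$ is injective and identifies $\Gamma$ with an odd-order subgroup of $SO(3)$, which will turn out to be cyclic and so absorbs into the cyclic case below.

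The classification of finite $G\subset SO(3)$ is carried out by the classical pole-counting argument. Each non-identity rotation fixes two antipodal poles on $S^2$; counting the set of pairs (element, fixed pole) in two ways and grouping poles into $G$-orbits yields
\[
\sum_{O}\left(1-\frac{1}{n_O}\right) = 2-\frac{2}{|G|},
\]
where $n_O$ is the order of the stabiliser of any pole in the orbit $O$. Elementary estimates on this Diophantine equation force either two orbits with $n_1=n_2=|G|$ (cyclic $C_n$) or three orbits with $(n_1,n_2,n_3)\in\{(2,2,n),(2,3,3),(2,3,4),(2,3,5)\}$, corresponding to the dihedral, tetrahedral, octahedral and icosahedral cases. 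Existence and uniqueness up to $SO(3)$-conjugation of a subgroup for each tuple is easy for $C_n$ and $D_n$ (normalise the rotation axis) and for the three exceptional types follows from the uniqueness up to rotation of the regular tetrahedron, octahedron and icosahedron inscribed in $S^2$.

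Finally, I would lift back to $SU(2)$. For the four polyhedral cases, $\tau^{-1}(G)$ has even order and hence contains $-I$, so any $\Gamma$ mapping onto $G$ must coincide with $\tau^{-1}(G)$; this gives precisely the binary dihedral, tetrahedral, octahedral and icosahedral groups of the statement. For the cyclic case, any generator of $\Gamma$ can be diagonalised by an element of $SU(2)$, so after conjugation $\Gamma$ lies in the standard diagonal torus and is generated by a matrix of the form displayed in (\ref{An-1}); the odd-order subcase (where $\Gamma\cong\tau(\Gamma)$) fits into the same normal form. The hard part is the pole-counting Diophantine step, together with the existence and uniqueness up to conjugation of the three exceptional polyhedral subgroups of $SO(3)$; everything surrounding $\ker\tau$ is then bookkeeping.
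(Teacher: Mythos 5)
The paper does not prove this theorem: it is quoted as a classical result with a reference to \cite{Klein}, so there is no in-paper argument to compare yours against. Your outline is the standard textbook proof and is essentially sound. The pole-counting identity and the resulting Diophantine analysis correctly isolate the cyclic, dihedral and three exceptional cases in $SO(3)$, and the observation that $-I$ is the unique involution of $SU(2)$ (so that Cauchy's theorem forces $\ker\tau\subset\Gamma$ whenever $|\Gamma|$ is even, whence $\Gamma=\tau^{-1}(\tau(\Gamma))$) is exactly the right way to reduce the lifting step to the classification downstairs; the odd-order case is also handled correctly, since all non-cyclic finite subgroups of $SO(3)$ have even order. One small link you should make explicit: in the cyclic case you speak of ``any generator of $\Gamma$'', which presupposes that $\Gamma$ itself is cyclic even when it strictly contains $\ker\tau$. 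That is true but deserves a sentence: if $\tau(\Gamma)$ is cyclic it consists of rotations about a single axis, so $\Gamma$ lies in the preimage of the corresponding $SO(2)$, which is a circle subgroup $U(1)\subset SU(2)$, and every finite subgroup of a circle is cyclic; diagonalising a generator (and replacing it by a suitable power) then puts $\Gamma$ in the form generated by (\ref{An-1}). With that sentence added, and with the classical existence and conjugacy-uniqueness of the three polyhedral subgroups of $SO(3)$ filled in as you indicate, the argument is complete.
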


\begin{Rem}
\begin{enumerate}
    \item In fact, Felix Klein classified the finite subgroups of
    $SL(2,\mathbb{C})$. Since for any finite $\Gamma\subset
    SU(2)$ there exists a $\Gamma$-invariant Hermitian form on
    $\mathbb{C}^2$, both problems are equivalent.  
    
    \item The finite subgroups of $SU(2)$ are often denoted as follows.

    \begin{itemize}
        \item The cyclic group with $n+1$ elements is $A_n$.
    
        \item The preimage of the embedding of the dihedral group
        with $2n-4$ elements into $SO(3)$ is $D_n$.
    
        \item The preimage of the tetrahedral group is $E_6$. 
    
        \item The preimage of the octahedral group is $E_7$.
    
        \item The preimage of the icosahedral group is $E_8$.  
    \end{itemize}
\end{enumerate}
\end{Rem}

These are the same names as of the simply laced Dynkin diagrams. There
is in fact a mathematical connection between the finite subgroups of
$SU(2)$ and the Dynkin diagrams that is known as the McKay correspondence
\cite{McKay}. Let $\Gamma\subset SU(2)$ be finite. The quotient 
$\mathbb{C}^2/\Gamma$ is a singular algebraic variety. A singularity of this 
kind is called \emph{du Val singularity} or \emph{ADE-singularity}. It is worth 
mentioning that the resolution graph of an ADE-singularity is precisely the 
Dynkin diagram that corresponds to $\Gamma$. We can extend our definition 
to singularities of complex orbifolds. 

\begin{Def}
An $n$-dimensional complex orbifold $M$ has an \emph{ADE-singula\-rity
of type $\Gamma$} along a complex $(n-2)$-dimensional submanifold
$N$ if for all $p\in N$ there exists a neighborhood of $p$ that can
be biholomorphically identified with an open neighborhood of 
$0\in \mathbb{C}^{n-2} \times \mathbb{C}^2/\Gamma$.  
\end{Def}

The definition of a $G_2$-orbifold with ADE-singularities is a bit more
complicated since $G_2$-orbifolds have odd dimension and thus do not 
carry a complex structure. In order to properly define that term we need 
a further concept.

\begin{Def}
A 3-dimensional submanifold $N$ of a $G_2$-manifold $(M,\phi)$ is 
called \emph{associative} if the restriction of $\phi$ to $N$ is the 
same as the volume form of $N$. 
\end{Def}

Let $p$ be a point on an associative
submanifold $N$. The tangent space $T_pM$ splits into the
tangent space of $N$ and its normal space $V_p$. Any $g\in G_2$
that acts as the identity on $T_pN$ leaves $V_p$ invariant. 
The subgroup of all those $g$ is isomorphic to $SU(2)$ and
it acts by its 2-dimensional complex representation on $V_p$.
Therefore, $T_pN\times V_p/\Gamma$ where $\Gamma\subset
SU(2)$ is finite carries a well-defined 3-form with stabilizer $G_2$.
This motivates the following definition. 

\begin{Def}
\begin{enumerate}
    \item Let $M$ be a 7-dimensional orbifold and let $\phi$ be a smooth
    3-form on $M$. We assume that $\phi_p$ is stabilized by $G_2$ if $p$
    is a smooth point of $M$. If $p$ is not smooth, $T_pM$ is isomorphic
    to $\mathbb{R}^7/\Gamma$ where $\Gamma\subset GL(7,\mathbb{R})$
    is finite. Let $\pi: \mathbb{R}^7 \rightarrow T_pM$ be the quotient map.
    We assume that $\pi^{\ast}\phi_p$ is stabilized by $G_2$, too. If 
    additionally $\phi$ is closed and coclosed, we call $(M,\phi)$ a 
    \emph{$G_2$-orbifold}. 

    \item Let $(M,\phi)$ be a $G_2$-orbifold, $\Gamma$ be a finite subgroup 
    of $SU(2)$ and let $N$ be an associative submanifold of $M$. We identify
    $\mathbb{C}^2\times \mathbb{R}^3$ canonically with $\mathbb{R}^7$.
    Let $\phi_0$ be a 3-form on $\mathbb{C}^2/\Gamma \times \mathbb{R}^3$
    whose pull-back to $\mathbb{R}^7$ with respect to the quotient map is
    stabilized by $G_2$. We assume that for any $p\in N$ there exists a 
    neighborhood $U$ with $p\in U\subset M$ and a coordinate system $f:U
    \rightarrow V \subset \mathbb{C}^2/\Gamma \times \mathbb{R}^3$ with 
    $f(p)=0$ and $f^{\ast}\phi_0 = \phi_p$. In this situation, we say that
    \emph{$(M,\phi)$ has an ADE-singularity of type $\Gamma$ along $N$}. 
\end{enumerate}    
\end{Def}

We describe very briefly the role of $G_2$-orbifolds in M-theory. For a more
detailed account we refer the reader to \cite{Ach,Ach2,Morrison1,Morrison2}. 
It is believed that M-theory is well-defined on 11-dimensional spacetimes with 
ADE-singularities. Before we consider compactifications on $G_2$-mani\-folds 
with ADE-singularities, we take a look at M-theory on $\mathbb{C}^2/\Gamma
\times \mathbb{R}^{6,1}$. Let $G$ be the compact simple Lie group whose 
Dynkin diagram has the same name as $\Gamma$. If we neglect gravity, 
the physics on $\{0\}\times\mathbb{R}^{6,1}$ is described by a super 
Yang-Mills theory with gauge group $G$ in the low-energy limit. For each 
$\Gamma$ there exists a one-parameter family of smooth hyper-K\"ahler 
metrics $(g_t)_{t\in(0,\infty)}$ on an underlying manifold $X$ such that any 
$g_t$ is asymptotic to $\mathbb{C}^2/\Gamma$ with the flat metric and 
$(X,g_t)$ converges for $t\rightarrow 0$ to $\mathbb{C}^2/\Gamma$
in the Gromov-Hausdorff limit \cite{Kro1}. M-theory on $(X,g_t)\times 
\mathbb{R}^{6,1}$ yields a super Yang-Mills theory with gauge group 
$U(1)^{\text{rank}(G)}$.   

Next, let $M$ be a $G_2$-manifold with an ADE-singularity of type $\Gamma$ 
along an associative submanifold $N$. The simplest case is that $b^1(N)=0$ and
that the singularity has no monodromy, i.e. there exists a tubular neighborhood
of $N$ that is diffeomorphic to $U/\Gamma \times N$ where $U$ is an open disc
in $\mathbb{C}^2$. In this situation, a subgroup $U(1)^{\text{rank}(G)}$ of the gauge
group gets enhanced to $G$. If the monodromy is non-trivial, it may act as an outer
automorphism on $\Gamma$. This automorphism induces a symmetry transformation
of the Dynkin diagram of $G$ and thus an outer automorphism $\tau$ of $G$. Instead
of a Yang-Mills theory with gauge group $G$ we have the subgroup of all fixed points
of $\tau$ as gauge group. If $b^1(N)>0$, the gauge group is left unchanged but 
there are $b^1(N)$ additional chiral multiplets in the massless spectrum.  

The reason why we have $b^3(M)$ chiral multiplets in the smooth case is that 
the moduli space of a compact $G_2$-manifold has dimension $b^3(M)$ \cite{Joyce}. 
An analogous result for $G_2$-orbifolds is not yet proven. Therefore, it is not
entirely clear if we obtain $b^3(M)$ chiral multiplets if $M$ is a $G_2$-orbifold. 
Nevertheless, we will compute the third Betti number of our examples since it is 
an important topological invariant.

\section{Flat examples of $G_2$-orbifolds}
\label{Local}

In this section, we describe the $G_2$-orbifolds from \cite{Ach} in detail. Before
we start, we recall the definition of a hyper-K\"ahler manifold.

\begin{Def}
A \emph{hyper-K\"ahler manifold} is a $4n$-dimensional Riemannian
manifold $(M,g)$ together with three linearly independent integrable 
complex structures $I^1$, $I^2$ and $I^3$ such that 

\begin{enumerate}
    \item the complex structures satisfy the quaternion multiplication 
    relation $I^1 I^2 I^3 = -\text{Id}$ and
    
    \item the 2-forms $\omega^1$, $\omega^2$ and $\omega^3$
    defined by $\omega^i(X,Y) = g(I^i(X), Y)$ are K\"ahler forms.   
\end{enumerate}

The data $(g,\omega^1,\omega^2,\omega^3)$ on $M$ are called the 
\emph{hyper-K\"ahler structure}. 
\end{Def}

\begin{Rem}
\begin{enumerate}
    \item The holonomy of the metric on a $4n$-dimensional hyper-K\"ahler 
    manifold is a subgroup of $Sp(n)$. If $n=1$ and $M$ is compact, it is either 
    a four-dimensional torus or a K3 surface. 
    
    \item The set of all parallel complex structures on a hyper-K\"ahler 
    manifold is a sphere $S^2$. 
\end{enumerate}
\end{Rem}

Let $T^3 := \mathbb{R}^3/\mathbb{Z}^3$ be the three-dimensional torus with 
coordinates $x^1$, $x^2$, $x^3$ and the flat metric $g_{ij}:= \delta_{ij}
dx^i \wedge dx^j$. Furthermore, let $S$ be a four-dimensional hyper-K\"ahler 
manifold. The three K\"ahler-forms $\omega^1$, $\omega^2$ and $\omega^3$ 
satisfy $\omega^i \wedge \omega^j = \delta^{ij} \text{vol}_S$, where 
$\text{vol}_S$ is the volume form of $S$. We define the following 3-form 
on $S\times T^3$:

\begin{equation}
\phi := \omega^1\wedge dx^1 + \omega^2\wedge dx^2 +
\omega^3\wedge dx^3 + dx^1\wedge dx^2\wedge dx^3\:.
\end{equation}

$\phi$ is a parallel $G_2$-structure on $S\times T^3$. For any $p\in S$
the submanifold $\{p\}\times T^3$ is associative. The Hodge-dual of
$\phi$ is

\begin{equation}
\ast\phi =\text{vol}_S + \omega^1\wedge dx^2\wedge dx^3
+ \omega^2\wedge dx^3\wedge dx^1 + \omega^3\wedge dx^1\wedge dx^2
\end{equation}

We allow $S$ to be an orbifold, too. Any singular point shall have a neighborhood 
that looks like an open set $U\subset \mathbb{C}^2/\Gamma$ where $0\in U$ and
$\Gamma$ is a finite subgroup of $SU(2)$. We have to require 
$\Gamma$ to be a subgroup of the holonomy group $Sp(1)$ in order to make the
hyper-K\"ahler structure on $S$ well-defined. Let $H$ be a finite group that acts 
freely and isometrically on $T^3$. The action of any $h\in H$ can be written as 

\begin{equation}
x \mathbb{Z}^3 \mapsto (A^h x + v^h) \mathbb{Z}^3  
\end{equation} 

where $v^h\in\mathbb{R}^3$ and $A^h\in SO(3)$. We assume that the action of 
$H$ can be extended to a free action on $S\times T^3$ such that the 
pull-back of $h$ acts as 

\begin{equation}
\label{PullBackCond}
\omega^i \mapsto A^h_{ij} \omega^j\:.
\end{equation}

If this is the case, $H$ leaves $\phi$ invariant and the quotient $M:=(S \times T^3)/H$
carries a well-defined $G_2$-structure. Since $H$ acts freely, there are no singularities 
worse than the ADE-singularities of $S$. We assume that there is no one-dimensional 
subspace of $\mathbb{R}^3$ that is invariant under the Euclidean motions $x\mapsto 
A^h x + v^h$. The holonomy is a group of type $Sp(1)\rtimes \Delta$, where $\Delta$
is a subgroup of $H$. Since the holonomy acts irreducibly on the tangent space, it is 
not a subgroup of $SU(3)$ although it is a subgroup of $G_2$. Therefore, $M$ carries 
exactly one parallel spinor.  

$S$ is chosen in \cite{Ach} as $\mathbb{C}^2/\Gamma$ together with the
canonical flat $Sp(1)$-structure, where  $\Gamma$ is generated by 

\begin{equation}
\left(\,
\begin{array}{cc}
\exp{\left(\frac{2\pi i}{n}\right)} & 0 \\
0 & \exp{\left(-\frac{2\pi i}{n}\right)} \\ 
\end{array}
\,\right)
\end{equation}

In other words, we have a singularity of type $A_{n-1}$. We introduce 
the following maps $\alpha,\beta,\gamma:\mathbb{C}^2\times T^3
\rightarrow \mathbb{C}^2\times T^3$:

\begin{eqnarray}
\alpha(z_1,z_2,x_1,x_2,x_3) & := & (\exp{\left(\tfrac{2\pi i}{n}\right)}z_1, 
\exp{\left(-\tfrac{2\pi i}{n}\right)}z_2, x_1,x_2,x_3) \\  
\beta(z_1,z_2,x_1,x_2,x_3) & := & (-z_1,z_2,-x_1 + \tfrac{1}{2},-x_2,x_3+\tfrac{1}{2})\\  
\gamma(z_1,z_2,x_1,x_2,x_3) & := & (-\overline{z_1},-\overline{z_2},-x_1,x_2+\tfrac{1}{2},-x_3)   
\end{eqnarray}

where $z_1$ and $z_2$ are the complex coordinates on $\mathbb{C}^2$
and we have abbreviated cosets of type $v\mathbb{Z}^3$ by $v$.
$\alpha$ describes the action of $\Gamma$ on 
$\mathbb{C}^2 \times T^3$. $\beta$ and $\gamma$ generate a group
$H_1$ that is isomorphic to $\mathbb{Z}_2\times \mathbb{Z}_2$. 
$\Gamma$ is a normal subgroup of the group that is generated by $\alpha$, 
$\beta$ and $\gamma$. The action of $H_1$ on $\mathbb{C}^2/\Gamma 
\times T^3$ is thus well-defined. By a straightforward calculation we see
that the pull-backs $\beta^{\ast}$ and $\gamma^{\ast}$ act on a suitable
basis $(\omega^1,\omega^2,\omega^3)$ of self-dual forms on $\mathbb{C}^2$
as described by equation (\ref{PullBackCond}). For all $k\in\mathbb{Z}$ 
we have 

\begin{equation}
\label{MonodromyRelation}
\gamma \alpha^k \gamma^{-1} = \alpha^{-k}\:.
\end{equation}

The singularities of $(\mathbb{C}^2/\Gamma \times T^3)/H_1$ therefore 
have a non-trivial monodromy if $n\geq 2$. There is a second class of 
examples in \cite{Ach}. $\alpha$ is as before but we divide by a group 
$H_2$ that is generated by the maps 

\begin{eqnarray}
\beta'(z_1,z_2,x_1,x_2,x_3) & := & (-z_1,z_2,-x_1,-x_2 + \tfrac{3}{4},x_3+\tfrac{1}{2})\\  
\eta(z_1,z_2,x_1,x_2,x_3) & := & (-i\overline{z_2},i\overline{z_1},x_1+\tfrac{1}{4},-x_2+\tfrac{1}{4},-x_3)   
\end{eqnarray}  

$H_2$ is isomorphic to $\mathbb{Z}_4 \rtimes \mathbb{Z}_2$. Again we can 
show that the action of $H_2$ on $\mathbb{C}^2/\Gamma \times T^3$ is 
well-defined and that the pull-backs of $\beta'$ and $\eta$ satisfy 
(\ref{PullBackCond}). Any element of $H_2$ commutes with $\alpha$ and 
the singularities of the quotient thus have trivial monodromy.  

As we will see below, we still obtain a well-defined action of $H_1$ and $H_2$
on $\mathbb{C}^2/\Gamma \times T^3$ if we replace $\Gamma$ by a group 
of type $D_k$ ($k\geq 4$) or $E_k$ ($k\in\{6,7,8\}$). We restrict the generators 
of $H_1$ and $H_2$ to $\mathbb{C}^2$ and obtain maps $\tau_1,\tau_2,\tau_3:
\mathbb{C}^2\rightarrow\mathbb{C}^2$ with

\begin{eqnarray}
\tau_1(z_1,z_2) & = & (-z_1,z_2) \\
\tau_2(z_1,z_2) & = & (-\overline{z_1},-\overline{z_2}) \\
\tau_3(z_1,z_2) & = & (-i\overline{z_2},i\overline{z_1}) 
\end{eqnarray}

Since we have

\begin{equation}
\tau_i SU(2) \tau_i^{-1} = SU(2)
\end{equation}

for $i\in \{1,2,3\}$, the action of $H_1$ and $H_2$ is indeed well-defined. 
The definition of any map $\beta$, $\gamma$, $\beta'$ or $\eta$ contains
a shift by a non-integer, for example $x_1\mapsto x_1+\tfrac{1}{4}$ in the case of 
$\eta$. Therefore, the projection of the group action to $T^3$ does not have
any fixed point. This ensures that the quotient does not have singularities 
apart from the ADE-singularity and that $T^3/H_1$ as well as $T^3/H_2$ are 
smooth manifolds.  

Choosing $S$ as $\mathbb{C}^2/\Gamma$ has the advantage that the group action  
as well as the hyper-K\"ahler structure on the quotient manifold $M$ can be written down 
explicitly. The disadvantage is that $M$ is not compact. Therefore, it would be nice 
to find compact examples which are in a certain sense similar to those from \cite{Ach}. 
More precisely, we search for the following objects:

\begin{enumerate}
    \item a K3 surface $S$ with ADE-singularities together with
    
    \item three complex structures $I^1$, $I^2$, $I^3$ and three
    K\"ahler forms $\omega^1$, $\omega^2$ and $\omega^3$
    which make $S$ a hyper-K\"ahler manifold and 
    
    \item isometries $\rho_1:S\rightarrow S$, $\rho_2:S\rightarrow S$
    with $\rho_1^2=\rho_2^2=\text{Id}$
\end{enumerate}

such that either

\begin{equation}
\label{RhoCondition1}
\begin{array}{lll}
\rho_1^{\ast}\omega^1 = -\omega^1 &
\rho_1^{\ast}\omega^2 = -\omega^2 &
\rho_1^{\ast}\omega^3 = \omega^3 \\
\rho_2^{\ast}\omega^1 = -\omega^1 &
\rho_2^{\ast}\omega^2 = \omega^2 &
\rho_2^{\ast}\omega^3 = -\omega^3 \\
\end{array}
\end{equation}

or

\begin{equation}
\label{RhoCondition2}
\begin{array}{lll}
\rho_1^{\ast}\omega^1 = -\omega^1 &
\rho_1^{\ast}\omega^2 = -\omega^2 &
\rho_1^{\ast}\omega^3 = \omega^3 \\
\rho_2^{\ast}\omega^1 = \omega^1 &
\rho_2^{\ast}\omega^2 = -\omega^2 &
\rho_2^{\ast}\omega^3 = -\omega^3 \\
\end{array}
\end{equation}

If we find such isometries, we can redefine $\beta$ as 

\begin{equation}
\begin{array}{rcl}
\beta:S\times T^3 & \rightarrow & S\times T^3 \\
\beta(p,x_1,x_2,x_3) & := & (\rho_1(p),-x_1+\tfrac{1}{2},-x_2,x_3+\tfrac{1}{2}) \\
\end{array}
\end{equation}

where $\rho_1$ shall satisfy (\ref{RhoCondition1}). 
The other maps $\gamma$, $\beta'$ and $\eta$ can be redefined analogously 
and we obtain an action of $H_1$ or $H_2$ on $S\times T^3$. The quotients
$(S\times T^3)/H_1$ and $(S\times T^3)/H_2$ are compact $G_2$-orbifolds with
ADE-singularities. Their holonomy group is in both cases $Sp(1)\rtimes
\mathbb{Z}_2^2$.

\section{The K3 moduli space}

Before we prove the existence of the involutions $\rho_1$ and $\rho_2$ 
from the previous section, we need some facts about K3 surfaces and 
their moduli space. The results of this section are well-known. We refer 
the reader to \cite[Chapter VIII]{BHPV}, \cite[Chapter 7.3]{Joyce} and 
references therein for a more detailed account.

\begin{Def}
A \emph{K3 surface} is a compact, simply connected, complex surface 
with trivial canonical bundle. 
\end{Def}

\begin{Rem}
At the moment, we require a K3 surface to be smooth. Later on, we 
include K3 surfaces with ADE-singularities into our considerations.
\end{Rem}

All K3 surfaces are deformations of each other. The diffeomorphism 
type of the underlying four-dimensional real manifold is therefore 
unique. In particular, their cohomology ring is fixed, too. 

\begin{Th}
Let $S$ be a K3 surface.
\begin{enumerate}
    \item The Hodge numbers of $S$ are determined by 
    $h^{0,0}(S)= h^{2,0}(S)=1$, $h^{1,0}(S)=0$, and $h^{1,1}=20$.  

    \item The second integer cohomology $H^2(S,\mathbb{Z})$ 
    together with the intersection form is a lattice that is isomorphic to 

    \begin{equation}
    L := 3 H \oplus 2(-E_8) \:,
    \end{equation}

    where $H$ is the hyperbolic plane lattice with the bilinear form

    \begin{equation}
    \label{Hyperbolic}
    \left(\,
    \begin{array}{cc}
    0 & 1 \\
    1 & 0 \\
    \end{array}
    \,\right)
    \end{equation}

   and $-E_8$ is the root lattice of $E_8$ together with the negative of the usual 
   bilinear form.
\end{enumerate}   
\end{Th}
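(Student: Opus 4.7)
The plan is to prove the Hodge numbers by standard Hodge theory combined with Noether's formula, and then identify the lattice $(H^2(S,\mathbb{Z}),\cup)$ via the classification of indefinite even unimodular lattices.

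\textbf{Hodge numbers.} I start from the defining properties: $S$ is simply connected with trivial canonical bundle $K_S$. Simple connectivity implies $b_1(S)=0$, so $h^{1,0}(S)=h^{0,1}(S)=0$. Triviality of $K_S$ gives $h^{2,0}(S)=h^0(S,K_S)=h^0(S,\mathcal{O}_S)=1$, and $h^{0,0}=1$ from connectedness. Hence $\chi(\mathcal{O}_S)=1-h^{0,1}+h^{0,2}=2$. Applying Noether's formula $12\chi(\mathcal{O}_S)=c_1^2+c_2$ with $c_1(S)=-c_1(K_S)=0$ yields $c_2(S)=24$, so the topological Euler characteristic is $24$ and $b_2(S)=22$. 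Finally $h^{1,1}(S)=b_2-2h^{2,0}=20$.

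\textbf{Lattice structure.} For the second part I would appeal to Milnor's classification theorem: an indefinite even unimodular lattice is determined up to isomorphism by its signature. It therefore suffices to verify that the intersection form on $H^2(S,\mathbb{Z})$ has the corresponding properties. Freeness and unimodularity follow from Poincar\'e duality together with the vanishing of torsion in $H^2$ for a simply connected closed $4$-manifold. Evenness follows from Wu's formula $x\cup x\equiv w_2(S)\cup x\pmod 2$ combined with the fact that $w_2(S)$ is the mod-$2$ reduction of $c_1(S)=0$. The Hirzebruch signature theorem $\sigma(S)=\tfrac{1}{3}(c_1^2-2c_2)=-16$, together with $b_2=22$, forces $(b_2^+,b_2^-)=(3,19)$. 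Since $3H\oplus 2(-E_8)$ is itself even, unimodular, and of signature $(3,19)$, the classification identifies it with $H^2(S,\mathbb{Z})$.

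The main conceptual input is Milnor's classification of indefinite even unimodular lattices, whose proof relies on genuinely nontrivial lattice-theoretic machinery (essentially the Hasse-Minkowski principle for rational quadratic forms); in the present exposition it is a black box to be cited. Apart from that, the argument is an orchestration of standard facts about smooth, closed, oriented, simply connected K\"ahler surfaces with trivial canonical bundle; the only care required is to ensure we are working with a smooth K3 surface, as emphasized in the Remark preceding the theorem.
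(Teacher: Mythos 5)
Your argument is correct and is exactly the standard proof found in the paper's cited reference (BHPV, Chapter VIII); the paper itself states this theorem without proof, deferring to that source. All the individual steps check out: Noether's formula and simple connectivity give the Hodge numbers, and freeness, unimodularity, evenness via Wu's formula, and the signature computation, combined with Milnor's classification of indefinite even unimodular lattices, identify the lattice with $3H\oplus 2(-E_8)$.
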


\begin{Def}
\begin{enumerate}
    \item The lattice $L$ from the above theorem is called the \emph{K3 lattice}.
    
    \item A K3 surface $S$ together with a lattice isometry $\phi:H^2(S,\mathbb{Z})
    \rightarrow L$ is called a \emph{marked K3 surface}.
    
    \item Two marked K3 surfaces $(S,\phi)$ and $(S',\phi')$ are called
    \emph{isomorphic} if there exists a biholomorphic map $f:S\rightarrow S'$
    such that $\phi\circ f^{\ast} = \phi'$. 
    
    \item The \emph{moduli space of marked K3 surfaces $\mathcal{M}_{K3}$}
    is the set of all marked K3 surfaces modulo isomorphisms.
\end{enumerate}
\end{Def}

Since the canonical bundle of $S$ is trivial, there exists a global holomorphic 
$(2,0)$-form on $S$. We denote it by $\omega^2 + i\omega^3$, 
where $\omega^2$ and $\omega^3$ are real 2-forms. We will see later that
$\omega^2$ and $\omega^3$ are the same objects as in Section \ref{Local},
in other words they are K\"ahler forms for appropriate complex structures on 
$S$. We denote the intersection form by a dot. It is easily seen that

\begin{equation}
[\omega^2 + i\omega^3]\cdot[\omega^2 + i\omega^3] = 0\:,
\qquad 
[\omega^2 + i\omega^3]\cdot[\omega^2 - i\omega^3] > 0\:,
\end{equation}

where $[\eta]\in H^2(S,\mathbb{C})$ denotes the cohomology class of a 
2-form $\eta$. Let $\mathbb{K}\in\{\mathbb{R},\mathbb{C}\}$, $L_{\mathbb{K}}:= 
L \otimes \mathbb{K}$ and $\phi_{\mathbb{K}}: 
H^2(S,\mathbb{K}) \rightarrow L_{\mathbb{K}}$ be the $\mathbb{K}$-linear 
extension of a marking $\phi$. Our  considerations motivate the following definition.

\begin{Def}
\begin{enumerate}
    \item We denote the complex line that is spanned by $x\in L_{\mathbb{C}}$
    by $\ell_x$. The set 
    
    \begin{equation}
    \Omega:=\{ \ell_x \in \mathbb{P}(L_{\mathbb{C}}) | x\cdot x = 0,\: x\cdot\overline{x}>0 \}  
    \end{equation}
    
    is called the \emph{period domain}. 
    
    \item Let $(S,\phi)$ be a marked K3 surface. The complex line spanned by 
    $\phi_{\mathbb{C}} ([\omega^2 + i\omega^3])$, where $[\omega^2 + i\omega^3]$
    is the cohomology class of the $(2,0)$-form, defines a point $p(S,\phi)\in 
    \Omega$ called the \emph{period point}. This assignment defines a map
    $p:\mathcal{M}_{K3}\rightarrow\Omega$, called the \emph{period map}. 
\end{enumerate}
\end{Def} 

In order to describe $\mathcal{M}_{K3}$ some further definitions are necessary.

\begin{Def}
\begin{enumerate}
    \item Let $S$ and $S'$ be K3 surfaces. A lattice isometry 
    $\psi:H^2(S,\mathbb{Z}) \rightarrow H^2(S',\mathbb{Z})$ is called a 
    \emph{Hodge-isometry} if its $\mathbb{C}$-linear extension preserves 
    the Hodge decomposition $H^2(S,\mathbb{C})=H^{2,0}(S) \oplus H^{1,1}(S) 
    \oplus H^{0,2}(S)$. 

    \item A class $x\in H^2(S,\mathbb{Z})$ is called \emph{effective} if
    there exists an effective divisor $D$ with $c_1(\mathcal{O}_X(D))
    = x$. 

   \item The connected component of the set $\{x\in H^{1,1}(S) | x\cdot x > 
   0\}$ which contains a K\"ahler class is called the \emph{positive
   cone} of $S$.

    \item A Hodge-isometry $\psi:H^2(S,\mathbb{Z}) \rightarrow 
    H^2(S',\mathbb{Z})$ is called \emph{effective} if it maps the 
    positive cone of $S$ to the positive cone of $S'$ and effective 
    classes in $H^2(S,\mathbb{Z})$ to effective classes in 
    $H^2(S',\mathbb{Z})$. 
\end{enumerate}
\end{Def}

\begin{Rem}
The restriction of the intersection form to $H^{1,1}(S)$
has signature $(1,19)$. The set $\{x\in H^{1,1}(S) | x\cdot x > 0\}$ thus
has exactly two connected components. It is known that any K3 surface is
K\"ahler. Therefore exactly one of the connected components contains
K\"ahler classes and the definition of the positive cone makes sense.
\end{Rem}

The following lemma fits into this context and will be helpful later on.

\begin{Le} (\cite[p. 313]{BHPV})
\label{EffectiveLemma}
Let $S$ and $S'$ be K3 surfaces and $\psi:H^2(S,\mathbb{Z})
\rightarrow H^2(S',\mathbb{Z})$ be a Hodge-isometry.  If $\psi$
maps at least one K\"ahler class of $S$ to a K\"ahler class of
$S'$, then $\psi$ is effective.
\end{Le}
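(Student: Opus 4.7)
The plan is to verify the two conditions in the definition of effectiveness separately. I would first fix a K\"ahler class $\kappa$ of $S$ for which $\psi(\kappa)$ is a K\"ahler class of $S'$; its existence is the hypothesis of the lemma.

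For the positive cone, I would use that the $\mathbb{R}$-linear extension of $\psi$ restricts to an isometry $H^{1,1}(S,\mathbb{R})\to H^{1,1}(S',\mathbb{R})$, so it carries $\{x\in H^{1,1}(S,\mathbb{R})\mid x\cdot x>0\}$ homeomorphically onto its counterpart on $S'$. By the remark preceding the lemma, each of these sets has exactly two connected components, and the positive cone is by definition the one containing the K\"ahler classes. Since $\psi$ sends connected components to connected components and carries $\kappa$ into the K\"ahler class $\psi(\kappa)$, the positive cone of $S$ is mapped onto the positive cone of $S'$.

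For effective classes it suffices to treat the class $[C]$ of an irreducible curve $C\subset S$, since every effective class is a non-negative integral combination of such classes. My strategy is a Riemann-Roch dichotomy. By adjunction on the K3 surface $S$ (where $K_S=0$) we have $[C]^2=2p_a(C)-2\geq -2$, and the isometry property then gives $\psi([C])^2\geq -2$. Since $\psi([C])$ lies in $H^{1,1}(S',\mathbb{Z})$, it is the first Chern class of some line bundle $L'$ on $S'$ by the Lefschetz $(1,1)$-theorem. Riemann-Roch on $S'$ combined with Serre duality (using $K_{S'}=0$) then yields $h^0(L')+h^0((L')^{-1})\geq \chi(L')=2+\tfrac{1}{2}\psi([C])^2\geq 1$, so either $\psi([C])$ or $-\psi([C])$ is represented by an effective divisor on $S'$.

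The final step is to eliminate the wrong sign using the K\"ahler class. Since $C$ is a positive-dimensional subvariety of $S$ and $\kappa$ is K\"ahler, $[C]\cdot\kappa=\int_C\kappa>0$, and the isometry property gives $\psi([C])\cdot\psi(\kappa)>0$. If $-\psi([C])$ were represented by a nonzero effective divisor $E'$ on $S'$, then $[E']\cdot\psi(\kappa)>0$ because $\psi(\kappa)$ is K\"ahler, which contradicts $\psi([C])\cdot\psi(\kappa)=-[E']\cdot\psi(\kappa)<0$. Hence $\psi([C])$ itself is effective, completing the argument. The main delicate point I anticipate is ensuring the Riemann-Roch dichotomy handles irreducible curves of all arithmetic genera uniformly, in particular the $(-2)$-curve case $C^2=-2$; the K\"ahler positivity in the last step is what invariably selects the correct sign in every case.
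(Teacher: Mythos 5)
The paper gives no proof of this lemma; it is quoted directly from \cite[p.~313]{BHPV}, and your argument is essentially the standard one found there: the positive cone is handled by connectedness of the two components of $\{x\cdot x>0\}$ in $H^{1,1}$, and effectivity is reduced to irreducible curves via adjunction, Riemann--Roch plus Serre duality on the K3 target, with the K\"ahler class selecting the correct sign. Your proof is correct, including the delicate points (the $(-2)$-curve case is covered since $\chi(L')\ge 1$ still holds, and $\psi([C])\cdot\psi(\kappa)>0$ rules out both the trivial bundle and the wrong sign), so nothing further is needed.
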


With help of the terms that we have defined above we are able to
state the following theorems. 

\begin{Th} (Torelli Theorem)
Let $S$ and $S'$ be two unmarked K3 surfaces. If there exists an effective
Hodge-isometry $\psi:H^2(S',\mathbb{Z}) \rightarrow H^2(S,\mathbb{Z})$,
$\psi$ is the pull-back of a biholomorphic map $f:S\rightarrow S'$. 
\end{Th}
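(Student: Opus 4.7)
My plan is to establish the statement by reducing it to marked K3 surfaces, proving the theorem first for Kummer surfaces via the (much easier) Torelli theorem for complex tori, and then extending to all K3 surfaces by a density and continuity argument. As a first step, I would fix a marking $\phi:H^2(S,\mathbb{Z})\to L$ and transport it via $\psi$ to a marking $\phi' := \phi\circ\psi$ on $S'$. Since $\psi$ is a Hodge-isometry, the period points $p(S,\phi)$ and $p(S',\phi')$ coincide in $\Omega$. The task becomes: given two marked K3 surfaces with equal period, such that the induced identification of their cohomologies is effective, produce a biholomorphism $f:S\to S'$ satisfying $\phi\circ f^{\ast}=\phi'$, equivalently $f^{\ast}=\psi$.

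Second, I would treat the Kummer case. A Kummer surface $S=\text{Km}(T)$ carries a distinguished sublattice of $H^2(S,\mathbb{Z})$ coming from the $16$ exceptional $(-2)$-classes above the fixed points of $T\to T/\{\pm 1\}$, together with the image of $H^2(T,\mathbb{Z})$. Effectiveness of $\psi$, together with the action of the Weyl group of the $(-2)$-classes, forces $\psi$ to preserve these structures up to reindexing of the exceptional classes, so it descends to a Hodge-isometry between $H^2(T,\mathbb{Z})$ and $H^2(T',\mathbb{Z})$. The classical Torelli theorem for complex tori (essentially linear algebra via the exponential map) produces a biholomorphism $T\to T'$, which descends to the desired $f$.

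Third, using that periods of Kummer surfaces are dense in $\Omega$ and that the period map is surjective, I would, for the general case, choose a smooth family of marked K3 surfaces containing $(S,\phi)$ and a parallel family containing $(S',\phi')$, with common period curve densely approximated by Kummer fibres on both sides. By the Kummer case, nearby fibres are biholomorphic via maps inducing the corresponding identifications of cohomology. A compactness argument in the space of holomorphic maps between K3 surfaces, together with the local Torelli theorem to rule out collapse of the limit, yields a biholomorphism $f:S\to S'$ whose pull-back is $\psi$.

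The main obstacle is precisely this final continuity step: one must show that the biholomorphisms $f_t$ constructed at nearby Kummer fibres have a subsequential limit that is again a biholomorphism and whose pull-back on cohomology is the prescribed $\psi$. The effectiveness hypothesis is essential here — without it, the limit could differ from $\psi$ by a Weyl reflection in some $(-2)$-class, which is exactly the subtle difference between a Torelli-type statement at the level of periods alone and the strong form under consideration. Controlling this requires a careful use of Lemma \ref{EffectiveLemma} on the approximating family, ensuring that the Kähler cones are transported consistently so that the limit biholomorphism realises $\psi$ itself and not a Weyl-reflected variant.
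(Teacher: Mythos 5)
The paper does not prove this theorem; it is quoted as a classical result, with the reader referred to \cite{BHPV} (Chapter VIII) and \cite{Joyce} for details. Your outline is in fact the standard Burns--Rapoport strategy followed there --- reduction to marked surfaces, the Torelli theorem for Kummer surfaces deduced from the Torelli theorem for complex tori, density of Kummer periods in $\Omega$, and a degeneration argument --- so the overall architecture is sound and matches the source the paper relies on. The first two steps of your sketch are acceptable in outline (with the usual care that effectiveness is what lets you recognise the sixteen nodal classes and descend to the torus, and that the torus isomorphism must be normalised so that the induced map on the Kummer surface realises $\psi$ on the exceptional classes as well).

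The genuine gap is in your third step. The sentence ``a compactness argument in the space of holomorphic maps between K3 surfaces, together with the local Torelli theorem to rule out collapse of the limit, yields a biholomorphism'' conceals the technical heart of the proof. What one actually passes to the limit is the sequence of graphs $\Gamma_{f_t}\subset S_t\times S'_t$ inside the relative cycle (Douady/Barlet) space; Bishop's compactness theorem gives a limit cycle $Z$ over the special point, but $Z$ need not be a graph. The failure mode is not ``collapse'' --- which local Torelli would indeed exclude --- but reducibility: $Z=\Gamma+\sum_i C_i\times C_i'$ with extra components supported on products of curves in the special fibres, and the component $\Gamma$ could a priori induce $\psi$ composed with reflections in $(-2)$-classes, or fail to be the graph of a map at all. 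The ``Main Lemma'' of Burns--Rapoport (the content of Sections 7--11 of Chapter VIII of \cite{BHPV}) is precisely the analysis of this limit cycle, using the effectiveness hypothesis and an argument with the K\"ahler cones to show that the correspondence defined by $Z$ on $H^2$ equals $\psi$ and that $\Gamma$ is the graph of a biholomorphism. You correctly identify that effectiveness must enter at this point, but you do not supply the argument, and the tool you invoke (local Torelli) does not address the actual difficulty. Until that lemma is established, the proposal is a plan rather than a proof.
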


\begin{Th} 
The period map $p:\mathcal{M}_{K3}\rightarrow\Omega$ is surjective. 
\end{Th}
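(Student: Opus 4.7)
The plan is to follow the classical strategy and prove surjectivity by combining the local Torelli theorem with a density argument via Kummer surfaces and a closedness argument via twistor families.

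First, I would prove the \emph{local Torelli theorem}: for any marked K3 surface $(S,\phi)$ the differential $dp_{(S,\phi)}$ is an isomorphism. Since $K_S$ is trivial, the Kodaira--Spencer space $H^1(S,T_S)$ is unobstructed by Bogomolov--Tian--Todorov, and contraction with the class $[\omega^2+i\omega^3]\in H^{2,0}(S)$ identifies it with the tangent space $\mathrm{Hom}(H^{2,0}(S),H^{1,1}(S))$ of $\Omega$ at $p(S,\phi)$. Both spaces have complex dimension $h^{1,1}(S)=20=\dim_{\mathbb{C}}\Omega$, and the identification is injective because the $(2,0)$-form is nowhere vanishing, hence an isomorphism. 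Consequently $p$ is a local biholomorphism and its image is open in $\Omega$.

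Next, I would use Kummer K3 surfaces to hit a dense subset of $\Omega$. For any complex $2$-torus $T=\mathbb{C}^2/\Lambda$, the minimal resolution $S_T$ of $T/\{\pm 1\}$ is a K3 surface whose period class is determined by that of $T$. As $\Lambda$ varies, the resulting marked periods sweep out a subset of $\Omega$ that is dense, essentially because the period domain for complex tori already surjects onto its own period domain and embeds densely under Kummer into the appropriate sublocus of $\Omega$.

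The main obstacle is closing the argument, since $\mathcal{M}_{K3}$ is non-Hausdorff and a direct compactness argument is unavailable. Here I would invoke \emph{twistor lines}: given any marked K3 surface $(S,\phi)$, Yau's solution to the Calabi conjecture provides a hyper-K\"ahler structure in any K\"ahler class, and the associated $S^2$-family of compatible complex structures produces a smooth rational curve $C_{S,\phi}\subset\Omega$ consisting entirely of periods of marked K3 surfaces. The remaining geometric input is that twistor lines through Kummer periods meet every neighbourhood of every point of $\Omega$, which one verifies by a direct tangent-space computation showing that such twistor lines span enough directions at generic points. Combining openness of the image, density of the Kummer locus, and the fact that each twistor line lies entirely in $p(\mathcal{M}_{K3})$, one concludes $p(\mathcal{M}_{K3})=\Omega$.
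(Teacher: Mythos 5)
The paper does not prove this theorem at all: it is stated as a standard fact with the reader referred to \cite{BHPV} and \cite{Joyce}, so there is no in-paper argument to compare yours against. Your outline follows the classical Todorov--Looijenga--Siu strategy, and the three ingredients you name (local Torelli, density of Kummer periods, twistor lines) are exactly the right ones.

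However, the way you assemble them at the end does not close the argument. Openness of the image (from local Torelli) together with density of the Kummer locus only shows that $p(\mathcal{M}_{K3})$ is a dense open subset of $\Omega$; a dense open set need not be all of $\Omega$, and your additional input --- that twistor lines through Kummer periods ``meet every neighbourhood of every point'' --- again only yields density of the union of those twistor lines, not surjectivity. What is actually needed is a statement about the point $x\in\Omega$ you are trying to hit, not about its neighbourhoods: namely, that every $x\in\Omega$ lies \emph{on} a twistor line (or a finite chain of twistor lines) passing through a point already known to be in the image, where at each step the positive $3$-plane defining the twistor line is spanned by the period plane of a marked K3 surface in the image together with an honest K\"ahler class on it (since the K\"ahler cone is open in $H^{1,1}$, generic twistor lines through image points do qualify). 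This connectivity-by-chains-of-generic-twistor-lines lemma is the real content of the final step; alternatively one can follow the original route and deduce surjectivity from Kummer density via a careful degeneration argument. Either way, the step must be supplied --- as written the conclusion does not follow from the premises. Two minor points: for a surface $H^2(S,T_S)=0$, so unobstructedness of deformations is automatic and Bogomolov--Tian--Todorov is not needed; and the local Torelli step implicitly uses the existence of a universal Kuranishi family, which holds because $H^0(S,T_S)=0$.
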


These two theorems make an explicit description of $\mathcal{M}_{K3}$
possible. Since we do not need that description in this article, we refer the 
reader to \cite{BHPV,Joyce} for details. What is important is that 
$\mathcal{M}_{K3}$ is a smooth complex manifold of dimension $20$. 
$\mathcal{M}_{K3}$ is not Hausdorff, but the moduli space that we are 
really interested in is.

\begin{Def}
\begin{enumerate}
    \item A \emph{marked pair} is a pair of a marked K3 surface and a 
    K\"ahler class on it. We usually write a marked pair as $(S,\phi,y)$
    where $(S,\phi)$ is a marked K3 surface and $y\in H^{1,1}(S)$
    is a K\"ahler class.
    
    \item Two marked pairs $(S,\phi,y)$ and $(S',\phi',y')$
    are called \emph{isomorphic} if there exists a biholomorphic map 
    $f:S\rightarrow S'$ that satisfies $\phi\circ f^{\ast} = \phi'$ and 
    $f^{\ast}y' = y$. 
    
    \item The \emph{moduli space of marked pairs $\mathcal{M}^p_{K3}$}
    is the set of all marked pairs modulo isomorphisms.
\end{enumerate}
\end{Def}

We define the following two sets:  

\begin{equation}
\begin{array}{rcl}
K\Omega & := & \{ (\ell_x,y)\in \Omega \times L_{\mathbb{R}} | \text{Re}(x)\cdot y
= \text{Im}(x)\cdot y = 0, y\cdot y > 0 \} \\
(K\Omega)^0 & := & \{ (\ell_x,y)\in K\Omega | y\cdot d\neq 0 \:\: \forall d\in L
\:\:\text{with}\:\: d^2 = -2, x\cdot d=0 \} 
\end{array}
\end{equation}

and the \emph{refined period map} 

\begin{eqnarray}
p':\mathcal{M}^p_{K3} & \rightarrow & \Omega \times L_{\mathbb{R}} \\
p'(S,\phi,y) & := & (p(S,\phi),\phi_{\mathbb{R}}(y))
\end{eqnarray}

\begin{Th}
$p'$ takes its values in $(K\Omega)^0$. Moreover, it is a bijection 
between $\mathcal{M}^p_{K3}$ and $(K\Omega)^0$. As a consequence, 
$\mathcal{M}^p_{K3}$ is a real analytic Hausdorff ma\-nifold of
dimension $60$. 
\end{Th}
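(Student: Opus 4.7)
The plan is to establish the four assertions in order: (a) $p'$ maps into $(K\Omega)^0$, (b) $p'$ is injective, (c) $p'$ is surjective, and (d) extract the real analytic manifold structure and dimension from these.

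For (a), let $(S,\phi,y) \in \mathcal{M}^p_{K3}$ and set $x = \phi_{\mathbb{C}}([\omega^2+i\omega^3])$. Since $\omega^2+i\omega^3$ represents the $(2,0)$-part and $y \in H^{1,1}(S,\mathbb{R})$, orthogonality of the Hodge components under the intersection form yields $x \cdot \phi_{\mathbb{R}}(y) = 0$, i.e.\ both $\mathrm{Re}(x)\cdot\phi_{\mathbb{R}}(y) = 0$ and $\mathrm{Im}(x)\cdot\phi_{\mathbb{R}}(y) = 0$. Positivity $\phi_{\mathbb{R}}(y)\cdot\phi_{\mathbb{R}}(y) > 0$ holds because $y$ is K\"ahler, so $p'(S,\phi,y) \in K\Omega$. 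For the open condition, any $d\in L$ with $d^2 = -2$ and $x\cdot d = 0$ pulls back to an integral $(1,1)$-class of self-intersection $-2$; by Riemann--Roch on a K3, $\pm\phi^{-1}(d)$ is effective, so pairs with the K\"ahler class $y$ with a definite nonzero sign. Hence $p'$ lands in $(K\Omega)^0$.

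For (b), suppose $p'(S,\phi,y) = p'(S',\phi',y')$. Then $\psi := (\phi')^{-1}\circ\phi : H^2(S,\mathbb{Z}) \to H^2(S',\mathbb{Z})$ is a lattice isometry whose $\mathbb{C}$-linear extension sends the period line of $S$ to that of $S'$, hence preserves the Hodge decomposition. It also sends $y$ to $y'$, which is K\"ahler on $S'$; by Lemma \ref{EffectiveLemma}, $\psi$ is effective, and the Torelli theorem produces a biholomorphism $f: S \to S'$ inducing $\psi$, establishing $(S,\phi,y) \sim (S',\phi',y')$.

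For (c), fix $(\ell_x, y) \in (K\Omega)^0$. Surjectivity of the period map $p$ yields a marked K3 surface $(S, \phi_0)$ with $p(S,\phi_0) = \ell_x$. Then $\phi_{0,\mathbb{R}}^{-1}(y) \in H^{1,1}(S,\mathbb{R})$ has positive self-intersection and lies in the positive cone up to sign. The main obstacle is to convert it into a genuine K\"ahler class. The standard approach is to use the Weyl group $W$ generated by reflections in effective $(-2)$-classes of $S$ (together with $-\mathrm{Id}$ if needed to flip the positive cone): the chambers of $W$ in the positive cone are the K\"ahler cones of the distinct markings of $S$ with the same underlying surface, and the hypothesis $y\cdot d \neq 0$ for every $(-2)$-class $d$ orthogonal to $x$ guarantees that $\phi_{0,\mathbb{R}}^{-1}(y)$ avoids every wall, hence lies in the interior of a unique chamber. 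Composing $\phi_0$ with an element of $W$ produces a new marking $\phi$ for which $\phi_{\mathbb{R}}^{-1}(y)$ is K\"ahler; then $(S,\phi,\phi^{-1}(y))$ is a marked pair with $p'$-image $(\ell_x, y)$.

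For (d), the period domain $\Omega$ is a complex manifold of complex dimension $20$ (a quadric hypersurface in $\mathbb{P}(L_{\mathbb{C}}) \cong \mathbb{P}^{21}$), hence real dimension $40$. The projection $K\Omega \to \Omega$ is an open subbundle of the rank-$20$ real vector bundle with fiber $\{y \in L_{\mathbb{R}} \mid \mathrm{Re}(x)\cdot y = \mathrm{Im}(x)\cdot y = 0\}$ over $\ell_x$, giving total real dimension $60$. The subset $(K\Omega)^0$ is the complement in $K\Omega$ of a locally finite union of real analytic hypersurfaces (one for each $(-2)$-class), hence open and of the same dimension, and is Hausdorff as a subspace of $\Omega\times L_{\mathbb{R}}$. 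Transporting this structure via the bijection $p'$ equips $\mathcal{M}^p_{K3}$ with the asserted structure of a Hausdorff real analytic manifold of dimension $60$.
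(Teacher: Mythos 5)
The paper states this theorem without proof, quoting it as a classical result and referring to \cite{BHPV} and \cite{Joyce}; your argument is precisely the standard proof from those references (Hodge-theoretic orthogonality and Riemann--Roch for membership in $(K\Omega)^0$, the Torelli theorem plus Lemma \ref{EffectiveLemma} for injectivity, surjectivity of $p$ combined with the Weyl-group action on the chambers of the positive cone for surjectivity), and it is correct. Only two cosmetic points: the excluded locus in $K\Omega$ consists of locally finitely many real analytic subsets of codimension $3$ (the conditions $x\cdot d=0$ already cut down the base), not hypersurfaces, though this does not affect openness; and the phrase about chambers being ``K\"ahler cones of the distinct markings'' should read that $W$ (together with $-\mathrm{Id}$) acts transitively on the chambers, exactly one of which is the K\"ahler cone, so that composing the marking with a suitable element makes $\phi_{\mathbb{R}}^{-1}(y)$ K\"ahler without moving the period point.
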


The following lemma shows why $\mathcal{M}^p_{K3}$ is the
moduli space that we need. 

\begin{Le}
Let $M$ be the underlying real manifold of a K3 surface. There is a one-to-one 
correspondence between hyper-K\"ahler metrics on $M$ together with a
choice of a parallel complex structure and a marking on the one hand
and marked pairs on the other hand.
\end{Le}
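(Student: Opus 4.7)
The plan is to construct explicit maps in both directions and verify they are mutually inverse; the only substantive analytic input is Yau's theorem.

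\emph{From marked pairs to hyper-K\"ahler data.} Given a marked pair $(S,\phi,y)$, Yau's resolution of the Calabi conjecture produces a unique Ricci-flat K\"ahler metric $g$ on $S$ whose K\"ahler form represents $y$. Since $S$ has trivial canonical bundle and is simply connected, the restricted holonomy of $g$ is contained in $SU(2)=Sp(1)$, so $g$ is a hyper-K\"ahler metric on the underlying real manifold $M$. The complex structure $I$ of $S$ is parallel with respect to $g$ because $g$ is K\"ahler for $I$. Retaining the marking $\phi$, we obtain the desired triple $(g,I,\phi)$ on $M$.

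\emph{From hyper-K\"ahler data to marked pairs.} Given a triple $(g,I,\phi)$ on $M$, the parallel complex structure $I$ is integrable, so $(M,I)$ is a compact complex surface and $g$ is K\"ahler with respect to $I$. A hyper-K\"ahler metric is Ricci-flat, so $c_1(M,I)=0$; combined with the simple connectedness of $M$, this forces the canonical bundle of $(M,I)$ to be trivial, so $(M,I)$ is a K3 surface $S$. The cohomology class $y:=[g(I\cdot,\cdot)]\in H^{1,1}(S)$ is a K\"ahler class, and $(S,\phi,y)$ is the sought marked pair.

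\emph{Inverse check.} Starting from $(S,\phi,y)$ and running the two constructions in succession recovers $(S,\phi,y)$: the K3 surface produced is $(M,I_S)$, the marking is untouched, and the K\"ahler class of the Yau metric is $y$ by construction. Starting from $(g,I,\phi)$, the backward map gives $(S,\phi,[\omega^I])$ with $\omega^I=g(I\cdot,\cdot)$; the forward map then produces the unique Ricci-flat K\"ahler metric on $(S,I)$ in the class $[\omega^I]$, and by the uniqueness clause in Yau's theorem this metric equals $g$ itself. The crux of the argument is thus Yau's theorem, supplying both existence and uniqueness; the remaining ingredients (integrability of a parallel almost complex structure, triviality of the canonical bundle on a simply connected Ricci-flat K\"ahler surface, and the reduction of the holonomy to $Sp(1)$) are standard.
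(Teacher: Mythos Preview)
Your proof is correct and follows essentially the same approach as the paper: construct maps in both directions using the Calabi--Yau theorem as the key analytic input and the standard holonomy argument to see that the resulting Ricci-flat metric is hyper-K\"ahler. If anything, your version is slightly more thorough, since you justify explicitly why $(M,I)$ is a K3 surface and you spell out the inverse check via uniqueness in Yau's theorem, whereas the paper leaves these verifications implicit.
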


\begin{proof}
Let $g$ be a hyper-K\"ahler metric on $M$. We choose one of the parallel
complex structures on $(M,g)$ and denote it by $I$. $(M,I)$ is a K3 surface
$S$. Furthermore, we choose an arbitrary marking $\phi$. Let $\omega^I$ be
the K\"ahler form on $M$ that is skew-Hermitian with respect to $g$ and $I$. 
$(S,\phi,[\omega^I])$ is a marked pair.

Conversely, let $(S,\phi,y)$ be a marked pair and $I$ be the complex structure 
on $S$. The Calabi-Yau theorem guarantees that there exists a unique K\"ahler 
form $\omega\in y$ such that $g(X,Y):=\omega(X,I(Y))$ is a Ricci-flat K\"ahler 
metric. Since $S$ is simply connected, the holonomy of $g$ is $SU(2)$. 
$SU(2)$ is isomorphic to $Sp(1)$ and the metric $g$ is actually
hyper-K\"ahler.  
\end{proof}

Again, let $M$ be the underlying manifold of a K3 surface. The moduli space 
of all hyper-K\"ahler metrics on $M$ together with a marking is diffeomorphic to

\begin{equation}
\begin{aligned}
\Omega_{hyp}:=\{ & (x_1,x_2,x_3)\in L_{\mathbb{R}}^3 | 
x_1^2 = x_2^2 = x_3^2>0,\: x_1\cdot x_2 = x_1\cdot x_3 = x_2\cdot x_3 =0, \\
& \!\! \not\exists\: d\in L \:\:\text{with}\:\: d^2 = -2 \:\:\: \text{and} \:\:\:
x_1\cdot d = x_2\cdot d = x_3\cdot d=0    
\} / SO(3) \\
\end{aligned}
\end{equation}

where $SO(3)$ acts by rotations on the three-dimensional space that is spanned by
$x_1$, $x_2$ and $x_3$, see for example \cite[p. 161]{Joyce}. 
If we do not want to fix a marking, we divide this space by $\text{Aut}(L)$, where 
$\text{Aut}(L)$ is the automorphism group of the lattice $L$. The moduli space of K3 
surfaces with a hyper-K\"ahler metric that may admit ADE-singularities can be described
even simpler. We define 

\begin{equation}
\begin{aligned}
\Omega'_{hyp} := \{ & (x_1,x_2,x_3)\in L_{\mathbb{R}}^3 | 
x_1^2 = x_2^2 = x_3^2>0\:, x_1\cdot x_2 = x_1\cdot x_3 = x_2\cdot x_3=0
\} / SO(3) \\
\end{aligned}
\end{equation}

and the \emph{hyper-K\"ahler period map} 

\begin{equation}
p'_{hyp}(g,\phi) := (\phi_{\mathbb{R}}([\omega^1]),\phi_{\mathbb{R}}([\omega^2]),
\phi_{\mathbb{R}}([\omega^3]))SO(3)  
\end{equation}

where $g$ is a hyper-K\"ahler metric on $M$, $\phi$ is a marking, $\omega^i$
with $i\in\{1,2,3\}$ are the forms that define the hyper-K\"ahler structure and 
$SO(3)$ acts on the span of $\phi_{\mathbb{R}}([\omega^1])$, 
$\phi_{\mathbb{R}}([\omega^2])$ and $\phi_{\mathbb{R}}([\omega^3])$.
$p'_{hyp}$ is a diffeomorphism between the moduli space of marked 
hyper-K\"ahler metrics on $M$ together with their degenerations to metrics 
with ADE-singularities and $\Omega'_{hyp}$. Since the intersection form
has signature $(3,19)$, $\Omega'_{hyp}$ admits a natural action by $SO(3,19)$. 
Moreover, it is diffeomorphic to the symmetric space

\begin{equation}
SO(3,19)/(SO(3)\times SO(19)) \:.
\end{equation}

Let $\alpha:=(x,y,z)SO(3) \in \Omega'_{hyp}\setminus\Omega_{hyp}$. We 
interpret the corresponding point in the moduli space geometrically, cf.
\cite[p. 161 - 162]{Joyce}. We define 

\begin{equation}
\label{SingSet}
\mathcal{D}_{\alpha}:= \{ d\in L | d^2=-2, x_1\cdot d = x_2\cdot d = 
x_3\cdot d = 0 \}
\end{equation}

Any $d\in\mathcal{D}_{\alpha}$ can be identified with an element of the homology group
$H_2(S)$. More precisely, the homology class contains a unique minimal 2-sphere 
with self-intersection number $-2$. Its area $A$ can be calculated in terms 
of the intersection form. More precisely, we have $A^2=\sum_{i=1}^3 
(d\cdot [\omega^i])^2$. If we move from a generic point in $\Omega'_{hyp}$ 
towards a point where $\mathcal{D}$ consists of a single element, the 2-sphere 
collapses to a point. In other words, we obtain a rational double point. 

Let the cardinality of $\mathcal{D}$ be $>1$. By joining $d_1,d_2\in\mathcal{D}$ 
by $d_1\cdot d_2$ edges, we obtain a graph $G$. $G$ is the disjoint union of 
simply laced Dynkin diagrams. As we approach $\alpha$, a set of 2-spheres whose 
intersection numbers are given by $d_i\cdot d_j$ collapses, which means that the 
Dynkin diagrams describe the type of the singularities. For example, if $G$ consists 
of one Dynkin diagram of type $E_8$ and $8$ isolated points, the singularities of the 
K3 surface are at $9$ different points. At one of them we have a singularity of type 
$E_8$ and at the other ones we have rational double points. 

Our aim is to construct K3 surfaces with a certain kind of automorphisms. This can 
be done with help of the following theorem.

\begin{Th}
\label{IsomThm}
Let $S$ be a K3 surface (possibly with ADE-singularities) together with a 
hyper-K\"ahler metric $g$ and K\"ahler forms $\omega^1$, $\omega^2$ and 
$\omega^3$. Moreover, let $V\subset H^2(S,\mathbb{R})$ be the subspace that 
is spanned by $[\omega^1]$, $[\omega^2]$ and $[\omega^3]$.  

\begin{enumerate}
    \item Let $f:S\rightarrow S$ be an isometry of $g$. The pull-back
    $f^{\ast}:H^2(S,\mathbb{Z}) \rightarrow H^2(S,\mathbb{Z})$ is
    an isometry of the lattice $H^2(S,\mathbb{Z})$. Its $\mathbb{R}$-linear
    extension preserves $V$.
    
    \item \label{Isom} Let $\psi:H^2(S,\mathbb{Z}) \rightarrow H^2(S,\mathbb{Z})$
    be a lattice isometry such that $\psi_{\mathbb{R}}(V)=V$. Moreover,
    $\psi_{\mathbb{C}}$ shall preserve the positive cone. Then there exists 
    an isometry $f:S\rightarrow S$ such that $f^{\ast} = \psi$. 
    
    \item Let $f:S\rightarrow S$ be an isometry that acts as the identity
    on $H^2(S,\mathbb{Z})$. Then, $f$ itself is the identity map. As a 
    consequence, the isometry from (\ref{Isom}) is unique.  
\end{enumerate}
\end{Th}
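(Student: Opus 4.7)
The theorem has three parts, which I would tackle separately: parts (1) and (3) by standard arguments about K3 surfaces, and part (2) as the substantive claim via reduction to the Torelli theorem.

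For part (1), $f^*$ is automatically a lattice isometry on $H^2(S,\mathbb{Z})$ because any homeomorphism preserves the cup product. To see that the $\mathbb{R}$-linear extension preserves $V$, I would give $V$ a purely Riemannian description: since $b^+(S)=3$ and the K\"ahler forms $\omega^1,\omega^2,\omega^3$ are harmonic and self-dual, $V$ is exactly the space of cohomology classes of harmonic self-dual 2-forms on $(S,g)$. Any isometry commutes with the Laplacian and the Hodge star, hence preserves this subspace.

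For part (2), the plan is to reduce to Torelli. Since $\psi$ preserves $V$ and (since $\psi$ is a lattice isometry) also the orthogonal complement $V^\perp$, the decomposition $H^{1,1}(S)=\mathbb{R}[\omega^1]\oplus V^\perp$ is preserved precisely when $\psi([\omega^1])=\pm[\omega^1]$, and the positive-cone hypothesis selects the $+$ sign. So $\psi$ fixes $[\omega^1]$ and acts on the orthogonal 2-plane $\mathrm{span}([\omega^2],[\omega^3])\subset V$ by some orthogonal transformation $A$. If $A$ is a rotation by angle $\theta$, then $\psi([\omega^2]+i[\omega^3])=e^{-i\theta}([\omega^2]+i[\omega^3])$, so $\psi$ preserves $H^{2,0}$ and is a Hodge isometry; if $A$ is a reflection, $\psi$ swaps $H^{2,0}$ and $H^{0,2}$ and the argument is completed using the antiholomorphic analogue of the Torelli theorem. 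In either case $\psi$ fixes the K\"ahler class $[\omega^1]$, so Lemma~\ref{EffectiveLemma} gives effectivity, and Torelli produces a (anti)biholomorphism $f$ of $(S,I^1)$ with $f^*=\psi$. Finally, $f^*g$ is a Ricci-flat K\"ahler metric on $(S,I^1)$ in the same K\"ahler class as $g$, so the uniqueness part of the Calabi--Yau theorem gives $f^*g=g$. The main obstacle I anticipate is careful bookkeeping between the rotation and reflection sub-cases, and in particular ruling out, by the positive-cone assumption, those $\psi$ for which $\psi([\omega^1])\notin\mathbb{R}[\omega^1]$ in the first place.

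For part (3), if $f^*=\mathrm{Id}$ on $H^2(S,\mathbb{Z})$ then $f^*[\omega^i]=[\omega^i]$ for $i=1,2,3$; by the Calabi--Yau uniqueness argument above, $f^*\omega^i=\omega^i$ pointwise, so $f$ is holomorphic with respect to each $I^i$. The remaining claim is the classical fact (cf.\ \cite[Ch.~VIII]{BHPV}) that a nontrivial holomorphic automorphism of a K3 surface acts nontrivially on $H^2(S,\mathbb{Z})$: I would invoke this via the holomorphic Lefschetz fixed-point formula, noting that triviality of $f^*$ on $H^{2,0}$ places $df_p$ into $SU(T_pS)$ at any fixed point, and that the identity theorem for holomorphic maps then forces $f=\mathrm{Id}$. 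Uniqueness of the isometry in~(\ref{Isom}) then follows immediately from this rigidity applied to $f_1\circ f_2^{-1}$.
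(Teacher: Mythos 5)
Your handling of parts (1) and (3) matches the paper's in substance: for (1) the paper identifies $V$ with the span of the parallel $2$-forms rather than of the harmonic self-dual ones, which is the same argument, and for (3) the paper simply cites Proposition 11.3 in Chapter VIII of \cite{BHPV}; your Lefschetz/identity-theorem sketch is not on its own a complete proof of that rigidity statement, but since you also cite \cite{BHPV} this is acceptable.

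The genuine gap is in part (2). You reduce to the case $\psi([\omega^1])=[\omega^1]$ by claiming the positive-cone hypothesis rules out every $\psi$ with $\psi([\omega^1])\notin\mathbb{R}[\omega^1]$ and then selects the sign $+$. That reading makes the theorem too weak to do its job: in Section 5 the paper applies part (2) precisely to lattice isometries $\psi_1,\psi_2$ acting on $([\omega^1],[\omega^2],[\omega^3])$ by $\mathrm{diag}(-1,-1,1)$ and $\mathrm{diag}(-1,1,-1)$, so with $\psi_i([\omega^1])=-[\omega^1]$, and these are asserted to preserve the positive cone. The hypothesis $\psi_{\mathbb{R}}(V)=V$ only forces $\psi|_V\in O(3)$, and ``preserves the positive cone'' must be read as in the definition of an effective Hodge isometry: $\psi$ carries the positive cone of the original Hodge structure to the positive cone of the \emph{new} Hodge structure whose $(2,0)$-part is $\psi_{\mathbb{C}}(H^{2,0}(S))$ (equivalently, $\det(\psi|_V)=+1$); it does not force $\psi$ to preserve $H^{1,1}$ at all. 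Your argument, which keeps $I^1$ fixed (or conjugates it) and applies Torelli there, only covers $\psi|_V$ block-diagonal with respect to $\mathbb{R}[\omega^1]\oplus\mathrm{span}([\omega^2],[\omega^3])$. The paper's proof instead performs a hyper-K\"ahler rotation: it declares $\psi_{\mathbb{C}}(H^{2,0})$, $\psi_{\mathbb{C}}(H^{0,2})$ and their orthogonal complement to be a second Hodge structure on the same surface, observes that $\psi$ is a Hodge isometry from the old structure to the new one taking the K\"ahler class $[\omega^1]$ to the new K\"ahler class, applies Lemma~\ref{EffectiveLemma} and Torelli to this \emph{pair} of complex structures, and then uses Calabi--Yau uniqueness together with the fact that $g$ and $f^{\ast}g$ share the same $2$-sphere of parallel complex structures to conclude $f^{\ast}g=g$. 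Without this rotation step (and a word on why Torelli still applies in the presence of ADE-singularities, which the paper at least flags), your proof does not produce the isometries $\rho_1,\rho_2$ that the construction actually needs.
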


\begin{proof}
\begin{enumerate}
    \item $f^{\ast}$ preserves the intersection form and thus is a 
    lattice isometry. Moreover, $f$ leaves the vector space of all 
    parallel 2-forms invariant. Since this space is spanned by $\omega^1$, 
    $\omega^2$ and $\omega^3$, $f^{\ast}$ preserves $V$. 

    \item We split $H^2(S,\mathbb{C})$ into $\psi_{\mathbb{C}}(H^{2,0}(S))$,
    $\psi_{\mathbb{C}}(H^{0,2}(S))$ and the orthogonal complement of those 
    two subspaces. This splitting is a Hodge-structure on $S$ and $\psi$ is a 
    Hodge-isometry between $S$ together with the original Hodge-structure 
    and $S$ with the new Hodge-structure. \newline

    There are two complex structures on $S$. One of them is induced by the 
    period point $\ell_{\phi([\omega^2 + i\omega^3])}$ and the other one
    by $\psi_{\mathbb{C}}(\ell_{\phi([\omega^2 + i\omega^3])})$,
    where $\phi$ is a fixed making of $S$.  Since $[\omega^1]$ is a 
    K\"ahler class wit respect to the first complex structure, $\psi([\omega^1])$
    is a K\"ahler class with respect to the second one. According to Lemma 
    \ref{EffectiveLemma}, $\psi$ is an effective Hodge-isometry.  It follows 
    from the Torelli theorem - which also holds for K3 surfaces with ADE-singularities
    - that there exists a map $f:S\rightarrow S$ that is 
    biholomorphic with respect to the two complex structures. \newline

    There exists a unique $\eta$ that is a $(1,1)$-form with respect to the 
    second complex structure, determines a Ricci-flat K\"ahler metric and
    satisfies $[\eta] = (f^{\ast})^{-1} [\omega^1]$. Since $(f^{-1})^{\ast} \omega^1$
    has the same properties as $\eta$, we have $\eta = (f^{-1})^{\ast} \omega^1$. The
    second complex structure together with $\eta$ determines a metric
    $h$ on $S$ that is hyper-K\"ahler with K\"ahler forms $f^{\ast}\omega^k$ 
    for $k=1,2,3$. We have $h=f^{\ast}g$ since there exists only 
    one hyper-K\"ahler metric with K\"ahler forms $f^{\ast}\omega^k$. 
    $f^{\ast}$ acts as an element of $SO(3)$ on the space that is spanned
    by the complex structures $I^k$. Therefore, $h$ and $g$ have the same 
    sphere of parallel integrable complex structures. It follows that $h=g$ and we
    finally have proven that $f^{\ast} g = g$. 

    \item This follows from Proposition 11.3 in Chapter VIII in \cite{BHPV}.  
\end{enumerate} 
\end{proof}

\begin{Rem}
If we had omitted the condition that $\psi_{\mathbb{C}}$ preserves the positive
cone, the above theorem would have been slightly more complicated. In that 
situation $\psi:=-\text{Id}_{H^2(S,\mathbb{Z})}$ satisfies all condition from the 
theorem. The isometry of $f:S\rightarrow S$ that we would obtain would be
the identity map, but it would have to be interpreted as an antiholomorphic map
between $(S,I^1)$ and $(S,-I^1)$. The additional sign is necessary to map the
K\"ahler form $\omega^1$ to $-\omega^1$.   
\end{Rem}

\section{Compact examples of $G_2$-orbifolds}

In order to construct our $G_2$-orbifolds, we need to find a K3 surface with a 
hyper-K\"ahler metric and two isometries $\rho_1$ and $\rho_2$ that satisfy 
either (\ref{RhoCondition1}) or (\ref{RhoCondition2}). A K3 surface
with a hyper-K\"ahler structure is specified by $3$ elements $x_1$, $x_2$ and
$x_3$ of $L_{\mathbb{R}}$ that satisfy certain conditions. Since we want to 
distinguish the different summands of $L$, we write

\begin{equation}
L = H^1 \oplus H^2 \oplus H^3 \oplus (-E_8)^1 \oplus (-E_8)^2\:.
\end{equation}

We choose for each $H^i$ a basis $(v_1^i,v_2^i)$ such that the bilinear form
on $H^i$ has the standard form (\ref{Hyperbolic}). Let  $x_i := v_1^i + 2v_2^i$. 
It is easy to see that $x_i\cdot x_j= 4\delta_{ij}$. Therefore,
$(x_1,x_2,x_3)SO(3)\in \Omega'_{hyp}$. It follows from Theorem \ref{IsomThm}
that in order to find $\rho_1$ and $\rho_2$ it suffices to find lattice isometries 
$\psi_1$ and $\psi_2$ preserving the positive cone such that either

\begin{equation}
\begin{array}{lll}
\psi_1(x_1) = -x_1 &
\psi_1(x_2) = -x_2 &
\psi_1(x_3) = x_3 \\
\psi_2(x_1) = -x_1 &
\psi_2(x_2) = x_2 &
\psi_2(x_3) = -x_3 \\
\end{array}
\end{equation}

or

\begin{equation}
\begin{array}{lll}
\psi_1(x_1) = - x_1 &
\psi_1(x_2) = -x_2 &
\psi_1(x_3) = x_3 \\
\psi_2(x_1) = x_1 &
\psi_2(x_2) = -x_2 &
\psi_2(x_3) = -x_3 \\
\end{array}
\end{equation}

The maps that act as the identity on $(-E_8)^1$ and $(-E_8)^2$ 
and as plus or minus the identity on the $H^i$ are obviously lattice
isometries. If we carefully look how the signs affect the complex structure
$I^1$ and the form $\omega^1$, we see that a map of this kind preserves 
the positive cone if and only if it is minus the identity on an even 
number of $H^i$s. By choosing the signs appropriately, we see that 
$\psi_1$ and $\psi_2$ exist. The set (\ref{SingSet}) that describes 
the singularities of our K3 surface $S$ is

\begin{equation}
\mathcal{D} := \{d\in (-E_8)^1 | d^2 = -2\} \cup \{d\in (-E_8)^2 | d^2 = -2\}
\end{equation}

since the complement of $x_i$ in $H^i$ is spanned by $-v_1^i + 2v_2^i$
which has length $-4$. $\mathcal{D}$ is the disjoint union of two root systems of
$E_8$ and $S$ therefore has two singular points with $E_8$-singularities. 
We are also interested in finding K3 surfaces with milder singularities. 
This can be done by the following method. We replace $x_1$ by 
$v_1^1 + 2v_2^1 + u^1 + u^2$ where the $u^i$s are vectors in 
$(-E_8)^i\otimes \mathbb{R}$. If the length of the $u^i$
is sufficiently small, $x_1$ still has a positive length $\ell$. By
replacing $x_2$ by $\frac{\ell}{4} x_2$ and $x_3$ by $\frac{\ell}{4} x_3$, we
obtain $x_1^2=x_2^2=x_3^2=\ell$ and $x_1\cdot x_2 = x_1\cdot x_3 =
x_2\cdot x_3 = 0$. Therefore, we have $(x_1,x_2,x_3)SO(3)\in \Omega'_{hyp}$.
The set $\mathcal{D}$ becomes 

\begin{equation}
\underbrace{\{d\in (-E_8)^1 | d^2 = -2,d\cdot u_1=0\}}_{=:\mathcal{D}_1} \cup
\underbrace{\{d\in (-E_8)^2 | d^2 = -2,d\cdot u_2= 0\}}_{=:\mathcal{D}_2}
\end{equation} 

Let $\{\alpha_1,\ldots,\alpha_8\}$ be a set of simple roots of $E_8$. 
We choose a proper subset of $\{\alpha_1,\ldots,\alpha_8\}$ and complement it
to a linearly independent family with seven elements by vectors
in $(-E_8)^1\otimes\mathbb{R}$ whose components are irrational. We define $u_1$ 
as a vector that is orthogonal to all elements of this family. $\mathcal{D}_1$ is a 
root system whose simple roots are our chosen subset. We obtain the Dynkin 
diagram of $\mathcal{D}_1$ by deleting a subset of nodes of $E_8$. If the 
Dynkin diagram of $\mathcal{D}_1$ is connected, it can be any element 
of the following set:

\begin{equation}
\{0,A_1,\ldots,A_7,D_4,D_5,D_6,D_7,E_6,E_7,E_8 \} \:.
\end{equation}

The cases where the Dynkin diagram is not connected are of course
allowed, too. The vector $u_2$ can be defined analogously. Geometrically
this procedure correspond to a partial crepant resolution of the 
$E_8$-singularities. All in all, we have constructed two families of 
$G_2$-orbifolds that have a wide range of different singularities. 
We call the quotients of $S\times T^3$ by the group that is generated by 
$\beta$ and $\gamma$ \emph{of the first kind} and those by $\beta'$ and 
$\eta$ \emph{of the second kind}. 

Let $H$ be the group by which we divide $S\times T^3$. $H$ acts on
both factors separately and our $G_2$-orbifold $M$ is thus a fiber bundle 
over $T^3/H$ with K3 fibers. The singular set $N$ of $M$ consists of a finite 
number of sections of that bundle. This number equals the number $k$ of 
singular points of $S$. $N$ is therefore diffeomorphic to the disjoint union 
of $k$ copies of $T^3/H$. Our next step is to compute the invariants $b^2(M)$,
$b^3(M)$ and $b^1(N)$ as well as the monodromies of the singularities since 
they are relevant for the physics of M-theory compactified on $M$. 

The de Rham cohomology $H^2(M)$ is isomorphic to the $H$-invariant 
part of $H^2(S\times T^3)$. The K\"unneth formula yields

\begin{equation}
H^2(S\times T^3) = \left(H^2(S)\otimes H^0(T^3)\right) \oplus 
\left(H^1(S)\otimes H^1(T^3)\right) \oplus 
\left(H^0(S)\otimes H^2(T^3)\right) 
\end{equation}   

We determine the $\beta$-invariant part of $H^2(S\times T^3)$. 
Since $\beta$ is an involution, the eigenvalues of 
$\beta^{\ast}$ are at most $-1$ and $1$. We mark the eigenspaces and
the corresponding Betti numbers with a subscript $1$ or $-1$. Since 
$H^0(S)$ and $H^0(T^3)$ are $\beta$-invariant and $H^1(S)$ is 
trivial, we have  

\begin{equation}
\begin{aligned}
H^2_1(S\times T^3) \cong H^2_1(S) \oplus H^2_1(T^3) \:.
\end{aligned}
\end{equation} 

If $S$ is a smooth K3 surface, $H^2_1(S)$ is $\left(H^3 \oplus (-E_8)^1 
\oplus (-E_8)^2\right)\otimes\mathbb{R}$ and $H^2_1(T^3)$ is spanned 
by $dx^{12}$. Therefore, we have $b^2_1(S\times T^3)= 19$. The same 
reasoning can be applied to $\gamma$. This time $H^2_1(S)$ is $\left(H^2 
\oplus (-E_8)^1 \oplus (-E_8)^2\right)\otimes \mathbb{R}$ and $H^2_1(T^3)$ 
is spanned by  $dx^{13}$. The $H$-invariant part of $H^2(S\times T^3)$ is 
the intersection of the $\beta$- and the $\gamma$-invariant  part. $H^2(M)$ 
thus is isomorphic to $\left((-E_8)^1 \oplus (-E_8)^2\right)\otimes
\mathbb{R}$ and we have $b^2(M)=16$. If $S$ has singularities, we 
have $b^2(M)=16 - \text{rank}(G)$ since $\text{rank}(G)$ two-spheres 
collapse. If $M$ is of the second kind, we obtain $b^2(M)=16  - 
\text{rank}(G)$ by analogous arguments. Our next step is to determine 
$b^3(M)$. Analogously as above we have

\begin{equation}
\begin{aligned}
H^3(S\times T^3) =\: & \left(H^3(S)\otimes H^0(T^3)\right) \oplus 
\left(H^2(S)\otimes H^1(T^3)\right)
\oplus \left(H^1(S)\otimes H^2(T^3)\right) \\
& \oplus \left(H^0(S)\otimes H^3(T^3)\right)\\
H^3_1(S\times T^3) \cong\: & \{0\} \oplus \left(H^2_1(S)\otimes H^1_1(T^3)\right)
\oplus \left(H^2_{-1}(S)\otimes H^1_{-1}(T^3)\right) \\
& \oplus \{0\} \oplus H^3(T^3)\\
\end{aligned}
\end{equation}  

where the eigenspaces are with respect to $\beta^{\ast}$. The non-trivial
spaces in the above equation are   

\begin{equation}
\begin{array}{rcl}
H^2_1(S) & = & \left(H^3 \oplus (-E_8)^1 \oplus (-E_8)^2\right)\otimes\mathbb{R} \\
H^1_1(T^3) & = & \text{span}(dx^3) \\
H^2_{-1}(S) & = & \left(H^1 \oplus H^2\right)\otimes \mathbb{R} \\
H^1_{-1}(T^3) & = & \text{span}(dx^1,dx^2) \\  
H^3(T^3) & = & \text{span}(dx^{123}) \\
\end{array}
\end{equation}

If we consider $\gamma^{\ast}$ instead of $\beta^{\ast}$, we have the same
decomposition of $H^3_1(S\times T^3)$ and

\begin{equation}
\begin{array}{rcl}
H^2_1(S) & = & \left(H^2 \oplus (-E_8)^1 \oplus (-E_8)^2\right)\otimes\mathbb{R} \\
H^1_1(T^3) & = & \text{span}(dx^2) \\
H^2_{-1}(S) & = & \left(H^1 \oplus H^3\right)\otimes \mathbb{R} \\
H^1_{-1}(T^3) & = & \text{span}(dx^1,dx^3) \\  
H^3(T^3) & = & \text{span}(dx^{123}) \\
\end{array}
\end{equation}

The intersection of the $\beta$- and the $\gamma$-invariant part of 
$H^3(S\times T^3)$ is spanned by $dx^{123}$ and all forms of
type $dx^i \wedge \alpha$ where $i\in \{1,2,3\}$ and $\alpha \in
H^i$. Therefore, we have $b^3(M)=7$. As before, we also have
$b^3(M)=7$ if $M$ is of the second kind. The two-spheres that 
collapse in the singular case all have homology classes that are 
Poincar\'{e} dual to elements of $(-E_8)^1 \oplus (-E_8)^2$. Therefore, 
the Betti number is the same in the smooth and in the singular case. 
Since there are no harmonic one-forms on the torus that are 
preserved by both generators of $H$, we have $b^1(N)=0$. 

Finally, we determine the monodromy group of the singularities. 
The reason for the non-trivial monodromy of the examples
from \cite{Ach} is that the group $\Gamma$ by which we divide 
$\mathbb{C}^2$ does not commute with $H$. Our situation
is different since we do not obtain $S$ by a quotient construction. 

Let $p\in S$ be a singular point. We show that $p$ is a fixed point of
$\rho_1$ and $\rho_2$. Since the $\rho_i$ are isometries, they have 
to map singular points to singular points of the same kind. $\rho_i$
may map $p$ to a singular point $q\neq p$ only if $p$ and
$q$ are of the same kind. If we resolve the singularity at $q$, we obtain 
a new isometry $\tilde{\rho}_i$ on the resolved K3 surface. $\tilde{\rho}_i$ 
maps a singular point to a smooth point which is impossible. 

The singularity at $p$ can be obtained by the contraction of certain curves
whose cohomology classes $d$ satisfy $d^2=-2$. We have constructed
$S$ in such a way that any $d$ is an element of $(-E_8)^2$ on which 
$\rho_1$ and $\rho_2$ act trivially. Therefore, $\rho_1$ and $\rho_2$ 
act trivially on the set of those curves in the resolved K3 surface, too.
In the limit where the curves are contracted, a neighborhood of $p$
becomes diffeomorphic to a neighborhood $U$ of $0\in
\mathbb{C}^2/\Gamma$. In this limit, $\rho_1$ and $\rho_2$ commute
with any element of $\Gamma$. 

Let $M$ be a $G_2$-orbifold of the first kind, $\pi:S\times T^3\rightarrow M$
be the projection map, $p\in M$ be a singular point and $q\in S\times
T^3$ be a point with $\pi(q)=p$. Moreover, let $c_1$ be a path from $q$ to 
$\beta(q)$ and $c_2$ be a path from $q$ to $\gamma(q)$. The monodromy 
group is generated by the monodromy along the loops $\pi\circ c_1$ and 
$\pi\circ c_2$.  Since the projections of $\beta$ and $\gamma$ to $S$ are 
$\rho_1$ and $\rho_2$, the monodromy along the loops is trivial. If $M$ is 
of the second kind, we can apply analogous arguments and we thus have 
proven that the monodromy is trivial in both cases. 

All in all, M-theory compactified on our orbifolds yields a 
four-dimen\-sional field theory whose bosonic part contains a 
Yang-Mills theory with gauge group $U(1)^{16-\text{rank}(G)} \times 
G$, where $G$ is the group that corresponds to the singularities, and the
graviton. Moreover, we suspect that there are $b^3(M)=7$ complex 
scalar fields. It is an interesting coincidence that the maximal gauge 
group that we obtain by this method is $E_8\times E_8$, which is one 
of the possible gauge groups of heterotic string theory.

\end{document}